 \newcommand{\I}{\mathrm{i}}
\newcommand{\D}{\mathrm{d}}
\newcommand{\lb}{\left(}
\newcommand{\vp}{\varphi}
\newcommand{\rb}{\right)}
\newcommand{\PD}{\partial}
\newcommand{\wt}{\widetilde}
\newcommand{\Fc}{\mathcal{F}}
\newcommand{\Rb}{\mathbb{R}}
\newcommand{\Beq}{\begin{equation}}
\newcommand{\Eeq}{\end{equation}}
\newcommand{\beq}{\begin{equation*}}
\newcommand{\eeq}{\end{equation*}}
\newcommand{\bal}{\begin{align}}
\newcommand{\eal}{\end{align}}
\newcommand{\g}{\gamma}
\newcommand{\A}{\alpha}
\newcommand{\B}{\beta}
\newcommand{\bp}{\begin{prob}}
\newcommand{\ep}{\end{prob}}
\newcommand{\bpr}{\begin{proof}}
\newcommand{\epr}{\end{proof}}
\newcommand{\st}{\,:\,} 
\newcommand{\rank}{\operatorname{rank}}
\newcommand{\bel}[1]{\begin{equation}\label{#1}}
\newcommand{\ee}{\end{equation}} 
\newcommand{\norm}[1]{|{#1}|}
\newcommand{\rtwo}{\mathbb{R}^2}
\newcommand{\rr}{\mathbb{R}}
\newcommand{\NT}{\negthinspace}
\newtheorem{theorem}{Theorem}[section]
\newtheorem{lemma}[theorem]{Lemma}
\newtheorem{proposition}[theorem]{Proposition}
\newtheorem{example}[theorem]{Example}
\theoremstyle{definition}
\newtheorem{definition}[theorem]{Definition}
\title[Bistatic SAR Imaging]
      {Microlocal Aspects of Bistatic Synthetic Aperture Radar Imaging}
\author[Venky P. Krishnan and Eric Todd Quinto]{}
\subjclass{Primary: 58F15, 58F17; Secondary: 53C35}
 \keywords{SAR Imaging, Radar, Fourier Integral Operators, Microlocal Analysis, Scattering}
\thanks{* Both authors were partially supported by NSF grant DMS
0908015.}
\begin{document}

\begin{abstract}
 In this article, we analyze the microlocal properties of the  linearized forward scattering operator $F$ and the reconstruction operator $F^{*}F$ appearing in bistatic synthetic aperture radar imaging.
In our model, the radar source and detector travel along a line a fixed
distance apart. We show that $F$ is a Fourier integral operator, and
we give the mapping properties of the projections from the canonical
relation of $F$, showing that the right projection is a blow-down and the left projection is a fold.  We then 
show that $F^{*}F$ is a singular FIO belonging to the class $I^{3,0}$.
\end{abstract}

\maketitle

\centerline{\scshape Venky P. Krishnan*  and Eric Todd Quinto*}
\medskip
{\footnotesize
 \centerline{Department of Mathematics, Tufts University}
   \centerline{503, Boston Avenue, Medford, MA 02155, USA}
} 



\bigskip


\maketitle

\section{Introduction} \label{sect:Introduction}

In synthetic aperture radar (SAR) imaging, a region of interest on the
surface of the earth is illuminated by electromagnetic waves from a
moving airborne platform. The goal is to reconstruct an image of the
region based on the measurement of scattered waves. For an in-depth
treatment of SAR imaging, we refer the reader to
\cite{CheneyBordenBook, Cheney2001}. SAR imaging is similar to other
imaging problems such as Sonar where acoustic waves are used to
reconstruct the shape of objects on the ocean floor \cite{And, CB1979,
LQ2000}.


In monostatic SAR, the source and the receiver are located on the
same moving airborne platform. In bistatic SAR, the source and the
receiver are on independently moving airborne platforms.  There are
several advantages to considering such data acquisition geometries.
The receivers, compared to the transmitters, are passive and hence are
more difficult to detect. Hence, by separating their locations,
the receivers alone can be in an unsafe environment, while the
transmitters are in a safe environment. Furthermore, bistatic SAR
systems are more resistant to electronic countermeasures such as
target shaping to reduce scattering in the direction of incident waves
\cite{HorneYates}.  

In this paper, we consider a bistatic SAR  system where the antennas have poor directivity and hence the beams do not focus on targets on the ground. We assume that the transmitter and receiver traverse a 1-dimensional curve and the back-scattered data is measured at each point on this curve for a certain period of time. As in the monostatic SAR case \cite{NC2002}, with a weak scattering assumption, the linear scattering operator that relates the unknown function that models the object on the ground to the data at the receiver (see \cite{YarmanYaziciCheney}) is a Fourier integral operator (FIO) \cite{Ho1971,Ho, Duistermaat,Treves}.  Now if $\Fc$ is an FIO, the canonical relation $\Lambda_{\Fc}$ associated to $\Fc$ tells us how the singularities of the object are propagated to the data. The canonical relation $\Lambda_{\Fc^{*}}$ of the $L^{2}$ adjoint $\Fc^{*}$ of $\Fc$ gives us information as to how the singularities in the data are propagated back to the reconstructed object. The microlocal analysis of singularities of the object is then done by analyzing the composition  $\Lambda_{\Fc^{*}}\circ \Lambda_{\Fc}$.

Such an analysis for monostatic SAR has been done by several authors \cite{NC2004, Felea,RF2} and  is fairly well understood. In their work \cite{NC2004}, Nolan and Cheney showed that the composition of the linearized scattering operator with its $L^{2}$ adjoint is a singular pseudodifferential operator ($\Psi$DO) and it belongs to class of Fourier
integral operators with two cleanly intersecting Lagrangians. Felea in her works \cite{Felea,RF2} further analyzed the properties of the composition of these operators. 

In this paper, we do a similar analysis for the bistatic SAR imaging problem. Given the complications that arise in treating arbitrary transmitter and receiver trajectories, in this paper, we focus on the case where the transmitter and receiver are at the same height above the ground, traverse the same linear trajectory at the same constant speed and spaced apart from each other by a constant distance. Furthermore, we assume that the object to be imaged is on the ground, which for simplicity, we will assume is flat. Since the measured data is two-dimensional, it is reasonable to expect that we can reconstruct a two-dimensional object.

The outline of the paper is as follows. Section
\ref{sect:Preliminaries} focuses on the preliminaries. Here we give
the linearized scattering model for bistatic SAR and definitions of
singularities and distributions that belong to $I^{p,l}$ classes and
important results on distributions belonging to this class that are
required in this paper. In Section \ref{sect:ForwardModel}, we
undertake a detailed study of the canonical relation associated this
FIO. This is the content of Theorem \ref{Theorem-1}. Then we study the
 reconstruction operator, that is, the composition of the
bistatic scattering operator with its adjoint, and show in Theorem
\ref{Theorem-2} that this operator belongs to the class
$I^{3,0}$. Our proof of \ref{Theorem-2} follows the ideas of
\cite[Theorem 1.6]{Felea}. Several identities required to prove
Theorem \ref{Theorem-2} are provided in the Appendix. 

\section{Preliminaries}\label{sect:Preliminaries}

\subsection{The bistatic linearized scattering model} We assume that a
bistatic SAR system is involved in imaging a scene. Let $\g_{T}(s)$
and $\g_{R}(s)$ for $s\in (s_{0},s_{1})$ be the trajectories of the
transmitter and receiver respectively. The transmitter transmits
electromagnetic waves that scatter off the target, which are then
measured at the receiver. We are interested in obtaining a linearized
model for this scattered signal. 

The propagation of electromagnetic waves can be described by the
scalar wave equation:
\begin{equation*}\label{Scalar Wave Equation}
\lb \Delta-\frac{1}{c^{2}}\PD_{t}^{2}\rb E(x,t)=P(t)\delta(x-\g_{T}(s)),
\end{equation*}
where $c$ is the speed of electromagnetic waves in the medium, $E(x,t)$ is the electric field
and $P(t)$ is the transmit waveform sent to the transmitter antenna located at position $\g_{T}(s)$. The wave speed $c$ is spatially varying due to inhomogeneities present in the medium.
We assume that the background in which the electromagnetic waves propagate is free space. Therefore $c$ can be expressed as:
\begin{equation*}
\frac{1}{c^{2}(x)}=\frac{1}{c_{0}^{2}}+\wt{V}(x),
\end{equation*} 
where the constant $c_{0}$ is the speed of light in free space and $\wt{V}(x)$ is the perturbation due to deviation from the background, which we would like to recover from backscattered waves. 

Since the incident electromagnetic waves in typical radar frequencies attenuate rapidly as they penetrate the ground, we assume that $\wt{V}(x)$  varies only on a 2-dimensional surface. Therefore, we represent $\wt{V}$ as a function of the form
\begin{equation*}
\wt{V}(x)=V( x)\delta_{0}(x_{3})
\end{equation*}
where we assume for simplicity that the earth's surface is flat, represented by the $x=(x_{1},x_{2})$ plane.

The background Green's function $g$ is then given by the solution to the following equation:
\begin{align*}
\lb \Delta-\frac{1}{c_{0}^{2}}\PD_{t}^{2}\rb g(x,\g_{T}(s),t)=\delta(x-\g_{T}(s))\delta(t).
\end{align*}
We can explicitly write $g$ as 
\begin{equation*}\label{Background Green's function}
g(x,\g_{T}(s),t)=\frac{\delta(t-|x-\g_{T}(s)|/c_{0})}{4\pi |x-\g_{T}(s)|}. 
\end{equation*}
Now the incident field $E^{\mathrm{in}}$ due to the source  $s(x,t)=P(t)\delta(x-\g_{T}(s))$ is 
\begin{align*}\label{Incident Field}
E^{\mathrm{in}}( x,t)&=\int g( x, y,t-\tau)s( y,\tau)\D  y\D \tau\\
&=\frac{P(t-| x-\g_{T}(s)|/c_{0})}{4\pi | x-\g_{T}(s)|}. 
\end{align*}
Let $E$ denote the total field of the medium, $E=E^{\mathrm{in}}+E^{\mathrm{sc}}$. Then the scattered field can be written using the Lippman-Schwinger equation:
\begin{equation}\label{Lippman-Schwinger}
E^{\mathrm{sc}}( z,t)=\int g( z, x,t-\tau)\PD_{t}^{2}E( x,\tau)T( x)\D  x\D \tau.
\end{equation}
We linearize \eqref{Lippman-Schwinger} by the first Born approximation and write the linearized scattered wave-field at receiver location $\g_{R}(s)$:
\begin{align}
\notag E^{\mathrm{sc}}_{\mathrm{lin}}(\g_{R}(s),t)&= \int g(\g_{R}(s), x,t-\tau)\PD_{t}^{2}E^{\mathrm{in}}( x,\tau)V( x)\D  x\D \tau\\
\label{Scattered-wave-field-2}&=\int \frac{\delta(t-\tau-| x-\g_{R}(s)|/c_{0})}{4\pi| x-\g_{R}(s)|} \left( e^{-\I \omega(\tau-| x-\g_{T}(s)|/c_{0})}\frac{\omega^{2}p(\omega)}{4\pi| x-\g_{T}(s)|}\right)\\
\notag &\hspace{0.2in}\times V( x)\D \omega\D  x\D \tau,
\end{align}
where $p$ is the Fourier transform of $P$.

Now, integrating \eqref{Scattered-wave-field-2} with respect to $\tau$, a linearized model for the scattered signal is as follows:
\begin{equation}\label{Bi-static Mathematical Model}
d(s,t):=E^{\mathrm{sc}}_{\mathrm{lin}}(\g_{R}(s),t)=\int e^{-\I \omega(t-\frac{1}{c_{0}}R(s,x))}A(s,x,\omega)V(x)\D x\D \omega,
\end{equation}
where 
\begin{equation}\label{Bi-static distance}
R(s,x)=|\g_{T}(s)-x|+|x-\g_{R}(s)|
\end{equation}
and 
\begin{equation}\label{Bi-static Amplitude}
A(s,x,\omega)=p(\omega)((4\pi)^{2}|\g_{T}(s)-x||\g_{R}(s)-x|)^{-1}.
\end{equation}
This function includes terms that take into account the transmitter and receiver antenna beam patterns, the transmitted waveform and geometric spreading factors.

We will show in Section \ref{sect:ForwardModel} in one important case,
that the transform $\Fc$ that maps $V$ to \eqref{Bi-static
Mathematical Model} is a Fourier integral operator
associated to a canonical relation $\Lambda$ (Proposition \ref{prop:C}),
and we will prove the mapping properties of $\Lambda$ (Propositions
\ref{St-line left projection} and \ref{prop:piR}).  These mapping
properties tell what $\Fc$ does to singularities.  We now define the
mapping properties we need.

\subsection{Singularities and $I^{p,l}$ classes}

Here we give the definitions of the singularities associated with our operator
$\Fc$ and its canonical relation \eqref{Bi-static Twisted Canonical
Relation}, and a class of distributions required for the analysis of the composition $\Fc$ with its $L^{2}$ adjoint.

 \begin{definition}\label{def:fold-blowdown} \cite{GU1990b} Let $M$
and $N$ be manifolds of dimension $n$ and let $f:N\to M$ be
$C^\infty$.

\begin{enumerate}

\item $f$ is a \emph{Whitney fold} if near each $m_0\in
M$, $f$ is either a local diffeomorphism or $f$ drops
rank  simply by one at $m_0$ so that $L= \{m\in M\st \rank df = n-1\}$
is a smooth hypersurface at $m_0$ and  $\ker df(m_0)\not\subset
T_{m_0}L$.

\item $f$ is a \emph{blow-down} along a smooth hypersurface $L \subset
M$ if $f$ is a local diffeomorphism away from $L$ and $f$ drops rank
simply by one at $L$, where the Hessian of $f$ is equal to zero and
$\ker df \subset T(L)$, so that $f\big|_L$ has one-dimensional fibers.

\end{enumerate}
\end{definition}

We now define $I^{p,l}$ classes. They were first introduced by Melrose
and Uhlmann, \cite{MU} Guillemin and Uhlmann \cite{GUh} and Greenleaf
and Uhlmann \cite{GU1990a} and they were used in the context of radar
imaging in \cite{NC2004,Felea, RF2}.

\begin{definition} Two submanifolds  $M$ and $N$ intersect {\it cleanly} if $M \cap N$ is a smooth submanifold and $T(M \cap N)=TM \cap TN$.
\end{definition}

Consider two spaces $X$ and $Y$ and let $\Lambda_0$ and $\Lambda_1$ and
$\tilde{\Lambda}_0$ and $\tilde{\Lambda}_1$ be Lagrangian submanifolds of
the product space $T^*X \times T^*Y$.  If they intersect cleanly
$(\tilde{\Lambda}_0, \tilde{\Lambda}_1)$ and $(\Lambda_0, \Lambda_1)$ are
equivalent in the sense that there is, microlocally, a canonical
transformation $\chi$ which maps $(\Lambda_0, \Lambda_1)$ into
$(\tilde{\Lambda}_0, \tilde{\Lambda}_1)$.  This leads us to the following
model case.

\begin{example}\label{ex:model}  Let  $\tilde{\Lambda}_0=\Delta_{T^*R^n}=\{ (x, \xi; x,
\xi)| x \in R^n, \ \xi \in R^n \setminus 0 \}$ be  the diagonal in
$T^*R^n \times T^*R^n$ and let  $\tilde{\Lambda}_1= \{ (x', x_n, \xi', 0; x',
y_n, \xi', 0)| x' \in R^{n-1}, \ \xi' \in R^{n-1} \setminus 0 \}$. 
Then, $\tilde{\Lambda}_0$ intersects $ \tilde{\Lambda}_1$ cleanly in
codimension $1$.\end{example}

Now we define the class of product-type symbols $S^{p,l}(m,n,k)$.

\begin{definition}
$S^{p,l}(m,n,k)$ is the set of all functions $a(z,\xi,\sigma) \in
C^{\infty} ({\bf R}^m \times {\bf R}^n \times {\bf R}^k )$ such that
for every $K \subset {\bf R}^m$ and every $\alpha \in {\bf Z}^n_+,
\beta \in {\bf Z}^n_+, \gamma \in {\bf Z}^k_+$ there is $c_{K, \alpha,
\beta}$ such
that
\[|\partial_z^{\alpha}\partial_{\xi}^{\beta}\partial_{\sigma}^{\gamma}
a(z,\xi,\sigma)| \le c_{K,\alpha,\beta}(1+ |\xi|)^{p- |\beta|} (1+
|\sigma|)^{l-| \gamma|}, \forall (z,\xi,\tau) \in K \times {\bf R}^n
\times {\bf R}^k.\]
\end{definition}

Since any two sets of cleanly intersecting Lagrangians are equivalent,
we first define $I^{p,l}$ classes for the case in Example
\ref{ex:model}.

\begin{definition}\cite{GUh}    Let $I^{p,l}(\tilde{\Lambda}_0,
\tilde{\Lambda}_1)$  be the set of all distributions  $u$
such that $u=u_1 + u_2$  with $u_1 \in C^{\infty}_0$
 and $$u_2(x,y)=\int e^{i((x'-y'-s)\cdot \xi'+(x_n-y_n-s) \cdot \xi_n+ s
\cdot \sigma)} a(x,y,s; \xi,\sigma)d\xi d\sigma ds$$  with $a \in
S^{p',l'}$  where $p'=p-\frac{n}{2}+\frac{1}{2}$  and
$l'=l-\frac{1}{2}$.
\end{definition}

This allows us to define the $I^{p,l}(\Lambda_0, \Lambda_1)$ class for any
two cleanly intersecting Lagrangians in codimension $1$ using the
microlocal equivalence with the case in Example \ref{ex:model}.

\begin{definition}   \cite{GUh} Let  $I^{p,l}( \Lambda_0, \Lambda_1)$
be the set of all distributions $u$ such that $ u=u_1 + u_2 + \sum v_i$
where $u_1 \in I^{p+l}(\Lambda_0 \setminus \Lambda_1)$, $u_2 \in
I^{p}(\Lambda_1 \setminus \Lambda_0)$, the sum $\sum v_i$ is locally
finite and $v_i=Fw_i$ where $F$ is a zero order FIO associated to
$\chi ^{-1}$, the canonical transformation from above, and $w_i \in
I^{p,l}(\tilde {\Lambda}_0, \tilde{\Lambda}_1)$.
\end{definition}

This class of distributions is invariant under FIOs associated to
canonical transformations which map the pair $(\Lambda_0, \Lambda_1)$
to itself.  By definition, $F\in I^{p,l}(\Lambda_{0},\Lambda_{1})$ if
its Schwartz kernel belongs to $I^{p,l}(\Lambda_{0},\Lambda_{1})$. If
$F \in I^{p,l}(\Lambda_0, \Lambda_1)$ then $F \in I^{p+l}(\Lambda_0
\setminus \Lambda_1)$ and $F \in I^p(\Lambda_1 \setminus \Lambda_0)$
\cite{GUh}.  Here by $F\in I^{p+l}(\Lambda_{0} \setminus
\Lambda_{1})$, we mean that the Schwartz kernel of $F$ belongs to
$I^{p+l}(\Lambda_{0}\setminus \Lambda_{1})$ microlocally away from
$\Lambda_{1}$.

\section{Transmitter and receiver in a linear
trajectory}\label{sect:ForwardModel}
Henceforth, let us assume that the trajectory of the transmitter is 
\[\g_{T}: (s_{0},s_{1})\to \Rb^{3},\quad \g_{T}(s)=(s+\A,0,h)\] and 
the trajectory of the receiver is \[\g_{R}(s): (s_{0},s_{1})\to
\Rb^{3}, \quad \g_{R}(s)=(s-\A,0,h).\] Here $\A>0$ and $h>0$ are fixed.
From Equation \eqref{Bi-static Mathematical Model}, the linearized 
model for the data at the receiver, for $s\in (s_{0},s_{1})$ and 
$t\in(t_{0},t_{1})$ is
\begin{equation}\label{Bi-static data}
d(s,t)=\int e^{-\I \omega\lb
t-\frac{1}{c_{0}}\lb|x-\g_{T}(s)|+|x-\g_{R}(s)|\rb\rb}A(s,x,\omega)V(x)\D
x\D \omega.
\end{equation}
We multiply $d(s,t)$ by a smooth (infinitely differentiable) function $f(s,t)$ supported in a compact subset of $(s_{0},s_{1})\times (t_{0},t_{1})$. This compensates for the discontinuities in the measurements at the end points of the rectangle $(s_{0},s_{1})\times (t_{0},t_{1})$. For simplicity, let us denote the function $f\cdot d$ as $d$ again. We then have 
\begin{equation}\label{Modified data-1}
d(s,t)=\int e^{-\I \omega(t-\frac{1}{c_{0}}R(s,x))}A(s,t,x,\omega)V(x)\D x\D \omega,
\end{equation}
where now $A(s,t,x,\omega)=f(s,t)A(s,x,\omega)$. 

Our method cannot image the point on the object that is ``directly underneath'' the transmitter and the receiver. This is, if the transmitter and receiver are at locations $(s+\A,0,h)$ and $(s-\A,0,h)$, then we cannot image the point $(s,0,0)$. Therefore we modify $d$ in Equation \eqref{Modified data-1} by multiplying by another smooth function $g(s,t)$ such that 
\[
g\equiv 0 \quad \mbox{ in a small neighborhood of } \Bigg{\{}\lb s,2\frac{\sqrt{\A^{2}+h^{2}}}{c_{0}}\rb \st s_{0}<s<s_{1}\Bigg{\}}.
\]

For simplicity, again denote $g\cdot d$ as $d$ and $g\cdot A$ as $A$. Consider,
\begin{equation}\label{Bi-static FIO}
F V(s,t):=d(s,t)=\int e^{-\I \omega(t-\frac{1}{c_{0}}\lb |x-\g_{T}(s)|+|x-\g_{R}(s)|\rb)}A(s,t,x,\omega)V(x)\D x\D \omega.
\end{equation}
For simplicity, let us denote the $(s,t)$ space as $Y$.

We assume that the amplitude function $A$ satisfies the following estimate: For every compact $K\in Y\times X$ and for every non-negative integer $\A$ and for every 2-indexes $\B=(\B_{1},\B_{2})$ and $\g$, there is a constant $C$ such that 
\begin{equation}\label{Amplitude Estimate}
|\PD_{\omega}^{\A}\PD_{s}^{\B_{1}}\PD_{t}^{\B_{2}}\PD_{x}^{\g}A(s,t,x,\omega)|\leq C(1+|\omega|)^{2-\A}.
\end{equation}

The phase function of the operator $F$,
\bel{def:psi}\psi(s,t,x,\omega)=-\omega\left(t-\frac{1}{c_{0}}(|x-\g_{T}(s)|+|x-\g_{R}(s)|)\right)\ee
is homogeneous of degree $1$  in $\omega$.

We now analyze some properties of the canonical relation of the
operator $F$.

\begin{proposition}\label{prop:C} $F$ is a Fourier integral operator
of order 3/2
associated with canonical relation 
\begin{align}\label{Bi-static Twisted Canonical Relation}
\Lambda =&\Bigg{\{}\lb s,t,-\frac{\omega}{c_{0}}\lb
\frac{x_{1}-s-\A}{|x-\g_{T}(s)|}+\frac{x_{1}-s+\A}{|x-\g_{R}(s)}\rb,-\omega\rb;\\
\nonumber & \Bigg{(}\NT x_{1},x_{2},-\frac{\omega}{c_{0}}\NT\lb \frac{x_{1}-s-\A}{|x-\g_{T}(s)|}+\frac{x_{1}-s-\A}{|x-\g_{R}(s)|}\rb,-\frac{\omega}{c_{0}}\NT\lb \frac{x_{2}}{|x-\g_{T}(s)|}+\frac{x_{2}}{|x-\g_{R}(s)|}\rb\negthinspace\negthinspace\Bigg{)}\\
\nonumber &: c_{0}t=\sqrt{(x_{1}-s-\A)^{2}+x_{2}^{2}+h^{2}}+\sqrt{(x_{1}-s+\A)^{2}+x_{2}^{2}+h^{2}},\quad \omega\neq 0 \Bigg{\}}.
\end{align}  Furthermore $(x_{1},x_{2},s,\omega)$ is a global
parameterization for  $\Lambda$.\end{proposition}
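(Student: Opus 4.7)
The plan is to verify directly that $F$ fits H\"ormander's FIO framework, compute the canonical relation from the critical-point equation of the phase $\psi$ in \eqref{def:psi}, and then check that $(x_1,x_2,s,\omega)$ gives good global coordinates on $\Lambda$. The order will be immediate from the standard dimensional count.

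First I would check that $\psi$ is a non-degenerate operator phase function. Since $\omega$ is the single phase variable and $\psi$ is homogeneous of degree $1$ in $\omega$, non-degeneracy reduces to showing $d_{s,t,x,\omega}(\partial_\omega\psi) \ne 0$ on the critical set $\{\partial_\omega\psi=0\}$. Because $\partial_\omega\psi = -t + R(s,x)/c_0$, we have $\partial_t\partial_\omega\psi = -1$ everywhere, so non-degeneracy is automatic. Combined with the symbol bound \eqref{Amplitude Estimate}, which places $A$ in an $\omega$-symbol class of order $2$, this identifies $F$ as an FIO with phase $\psi$ and amplitude $A$. With $N=1$ phase variable, $n_X=n_Y=2$, and symbol order $m=2$, the H\"ormander order is $m + N/2 - (n_X+n_Y)/4 = 2 + 1/2 - 1 = 3/2$.

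Next I would compute $\Lambda$ from the standard prescription
\[
\Lambda \;=\; \{(y,\partial_y\psi;\,x,-\partial_x\psi) \st \partial_\omega\psi = 0\}.
\]
Since $\gamma_T'(s) = \gamma_R'(s) = (1,0,0)$, the chain rule gives
\[
\partial_s|x-\gamma_T(s)| = -\frac{x_1-s-\alpha}{|x-\gamma_T(s)|}, \qquad \partial_{x_1}|x-\gamma_T(s)| = \frac{x_1-s-\alpha}{|x-\gamma_T(s)|},
\]
together with analogous formulas for $\gamma_R$ (with $\alpha \mapsto -\alpha$); and the constraint $\partial_\omega\psi=0$ becomes the travel-time relation $c_0 t = |x-\gamma_T(s)| + |x-\gamma_R(s)|$. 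Assembling $\partial_s\psi$, $\partial_t\psi = -\omega$, $-\partial_{x_1}\psi$, and $-\partial_{x_2}\psi$ with the correct signs reproduces \eqref{Bi-static Twisted Canonical Relation} exactly.

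Finally, for the global parameterization: each point of $\Lambda$ displays its base coordinates $(s,t,x_1,x_2)$, and $\omega = -\eta_t$, where $\eta_t$ is the $t$-component of the output covector. Thus $(s,x_1,x_2,\omega)$ is recovered from any point of $\Lambda$, so the parameterization map is injective, and its Jacobian has rank $4$ because these four coordinates already separate points. Smoothness is immediate since $h>0$ keeps $|x-\gamma_T(s)|$ and $|x-\gamma_R(s)|$ uniformly bounded below, and $\omega \ne 0$ by definition. The only obstacle I anticipate is bookkeeping the signs through the four covector components; conceptually, everything is forced by the simple dependence of $\psi$ on $(t,\omega)$ and the explicit form of $\gamma_T, \gamma_R$.
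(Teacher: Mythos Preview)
Your proposal is correct and follows essentially the same route as the paper's proof: verify that $\psi$ is a nondegenerate phase, invoke the standard H\"ormander formula $\Lambda=\{(s,t,\partial_{s,t}\psi;x,-\partial_x\psi)\st\partial_\omega\psi=0\}$, read off the order from the amplitude order and the dimensional count, and observe that $(x_1,x_2,s,\omega)$ parameterize $\Lambda$. The paper's argument is terser but structurally identical; the only point it states that you leave implicit is that $\partial_x\psi$ is nowhere zero on the critical set (this is where the cutoff $g$ introduced before \eqref{Bi-static FIO} is used, since $\partial_x\psi$ vanishes exactly at the excluded point $x=(s,0)$), which is needed so that $\Lambda\subset (T^*Y\setminus 0)\times(T^*X\setminus 0)$.
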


\begin{proof}  This is a straightforward application of the theory of
FIO.  Since $\psi$ in  \eqref{def:psi} is a nondegenerate phase function
with $\PD_{x}\psi$ and $\PD_{s,t}\psi$ nowhere zero and the amplitude
$A$ in \eqref{Bi-static FIO} is of order 2, $F$ is an FIO
\cite{Ho1971}.  Since the amplitude is of order $2$, the order of the FIO is 3/2 by \cite[Definition 3.2.2]{Ho1971}.  By
definition \cite[Equation (3.1.2)]{Ho1971} 
\[
\Lambda =
\{(s,t,\PD_{s,t}\psi(x,s,t,\omega)),(x,-\PD_{x}\psi(x,s,t)) \st
\PD_{\omega}\psi(x,s,t,\omega)=0\}.\]  A calculation using this definition
establishes \eqref{Bi-static Twisted Canonical Relation}. Finally, it is easy to see that $(x_{1},x_{2},s,\omega)$ is a global parameterization of $\Lambda$.

\end{proof} 

  In order to understand the microlocal mapping properties of $F$ and
$F^{*}F$, we consider the projections $\pi_{L}: T^{*}Y\times
T^{*}X\to T^{*}Y$ and $\pi_{R}:T^{*}Y\times T^{*}X\to T^{*}X$.

\begin{proposition}\label{St-line left projection}
The projection $\pi_{L}$ restricted to $\Lambda$ has a fold singularity on
$\Sigma:=\{(x,0,s,\omega):\omega\neq 0\}$.
\end{proposition}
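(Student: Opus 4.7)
The plan is to verify both parts of Definition \ref{def:fold-blowdown}(1) directly by writing out the Jacobian of $\pi_L$ in the global parameterization $(x_1,x_2,s,\omega)$ of $\Lambda$ from Proposition \ref{prop:C}. Setting $R_T = |x-\g_T(s)|$, $R_R = |x-\g_R(s)|$ and $t=(R_T+R_R)/c_0$, and noting that $\sigma=-\omega\, t_{x_1}$, the projection reads
$$\pi_L(x_1,x_2,s,\omega) \;=\; (s,\; t,\; -\omega\, t_{x_1},\; -\omega).$$
I would order the rows of the Jacobian as $(s,t,\sigma,-\omega)$ and the columns as $(x_1,x_2,s,\omega)$. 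The first and last rows each contain a single nonzero entry, so a cofactor expansion collapses the $4\times 4$ determinant to a $2\times 2$ one and yields
$$\det d\pi_L \;=\; \omega\bigl(t_{x_1}\, t_{x_1 x_2} - t_{x_2}\, t_{x_1 x_1}\bigr).$$

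The next step is to compute the requisite first and second derivatives of $R_T, R_R$ explicitly, using $\PD_{x_2}R_T = x_2/R_T$ and $\PD_{x_1}R_T = (x_1-s-\A)/R_T$ (and the analogues for $R_R$), together with the identities $a^2+x_2^2+h^2 = R_T^2$ and $b^2+x_2^2+h^2 = R_R^2$, where $a = x_1-s-\A$ and $b = x_1-s+\A$. After collecting terms the determinant should factor as
$$\det d\pi_L \;=\; -\frac{\omega\, x_2\, (R_T^2+R_R^2)}{c_0^2\, R_T^2 R_R^2}\left(1 + \frac{ab+x_2^2+h^2}{R_T R_R}\right).$$
The parenthesized factor is positive on all of $\Lambda$ thanks to the identity $(a^2+h^2+x_2^2)(b^2+h^2+x_2^2) - (ab+h^2+x_2^2)^2 = (a-b)^2(h^2+x_2^2) = 4\A^2(h^2+x_2^2)$, which is strictly positive since $\A,h>0$; hence $R_T R_R > |ab+x_2^2+h^2|$. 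Consequently $\det d\pi_L$ vanishes precisely and to first order on $\Sigma=\{x_2=0,\,\omega\neq 0\}$, so $\Sigma$ is exactly the critical set and $\pi_L$ drops rank simply by one there.

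For the transversality condition, I would evaluate the Jacobian at $x_2 = 0$: both $t_{x_2}$ and $\sigma_{x_2}=-\omega\, t_{x_1 x_2}$ carry an explicit factor of $x_2$ and therefore vanish on $\Sigma$, killing the $\PD_{x_2}$-column of the Jacobian. Using $\sigma_{x_1}|_\Sigma = -\omega h^2(R_T^{-3}+R_R^{-3})/c_0 \neq 0$, one checks that the remaining three columns are linearly independent, so $\ker d\pi_L|_\Sigma = \mathrm{span}(\PD_{x_2})$. Since $\Sigma$ is cut out by $x_2=0$ and hence $T\Sigma = \{dx_2 = 0\}$, the vector $\PD_{x_2}$ is transverse to $T\Sigma$, completing the fold check.

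The main obstacle is organizing the algebra so that the sum $\omega\bigl(t_{x_1} t_{x_1 x_2} - t_{x_2} t_{x_1 x_1}\bigr)$ consolidates into the clean factored form above; once the pairings $a^2+h^2+x_2^2 = R_T^2$ and the symmetric structure in the transmitter/receiver contributions are exploited, the positivity of the bracket and the kernel transversality both follow essentially by inspection.
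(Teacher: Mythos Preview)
Your argument is correct and follows essentially the same route as the paper: compute the Jacobian of $\pi_L$ in the global coordinates $(x_1,x_2,s,\omega)$, factor the determinant as $\omega x_2$ times a strictly positive expression, and then read off $\ker d\pi_L|_\Sigma=\mathrm{span}(\PD_{x_2})\not\subset T\Sigma$. Your factored determinant agrees (up to an irrelevant overall sign) with the paper's, since $(x_1-s)^2+x_2^2+h^2-\A^2=ab+x_2^2+h^2$ and $R_T^{-2}+R_R^{-2}=(R_T^2+R_R^2)/(R_T^2R_R^2)$; and your identity $R_T^2R_R^2-(ab+x_2^2+h^2)^2=4\A^2(x_2^2+h^2)>0$ is exactly the content of the paper's Lemma~\ref{St-line-nonvanishing lemma}, just stated more explicitly.
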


\begin{proof}
The projection $\pi_{L}$ is given by
\begin{equation}\label{Left Projection}
\pi_{L}(x_{1},x_{2},s,\omega)=
\end{equation}
\begin{align*}
=\Bigg{(} s,\frac{1}{c_{0}}\lb |x-\g_{T}(s)|+|x-\g_{R}(s)|\rb,
-\frac{\omega}{c_{0}}\lb \frac{x_{1}-s-\A}{|x-\g_{T}(s)|}+\frac{x_{1}-s+\A}{|x-\g_{R}(s)}\rb,-\omega\Bigg{)}
\end{align*}
We have 
\begin{equation*}
\nonumber(\pi_{L})_{*}=
\left(
\begin{matrix}
0&0&1&0\\
\frac{1}{c_{0}}\lb \frac{x_{1}-s-\A}{|x-\g_{T}(s)|}+\frac{x_{1}-s+\A}{|x-\g_{R}(s)|}\rb&\frac{1}{c_{0}}\lb \frac{x_{2}}{|x-\g_{T}(s)|}+\frac{x_{2}}{|x-\g_{R}(s)|}\rb&*&0\\
-\frac{\omega}{c_{0}}\lb \frac{x_{2}^{2}+h^{2}}{|x-\g_{T}(s)|^{3}}+\frac{x_{2}^{2}+h^{2}}{|x-\g_{R}(s)|^{3}}\rb& \frac{\omega}{c_{0}}\lb \frac{(x_{1}-s-\A)x_{2}}{|x-\g_{T}(s)|^{3}}+\frac{(x_{1}-s+\A)x_{2}}{|x-\g_{R}(s)|^{3}}\rb&*&*\\
0&0&0&-1
\end{matrix}
\right)
\end{equation*}
Then 
\[
\det(\pi_{L})_{*}=\frac{\omega}{c_{0}^{2}}x_{2}\lb
\frac{1}{|x-\g_{T}(s)|^{2}}+\frac{1}{|x-\g_{R}(s)|^{2}}\rb\lb
1+\frac{(x_{1}-s)^{2}+x_{2}^{2}+h^{2}-\A^{2}}{|x-\g_{T}(s)||x-\g_{R}(s)|}\rb.
\]
The proposition then follows as a consequence of the following lemma.
\begin{lemma}\label{St-line-nonvanishing lemma}
The term
\[
1+ \frac{(x_{1}-s)^{2}+x_{2}^{2}+h^{2}-\A^{2}}{|x-\g_{T}(s)||x-\g_{R}(s)|}.
\]
is positive for all $x\in \rtwo$, $s\in \rr$ and $h$ and $\alpha$
positive.
\end{lemma}
\bpr This is a straightforward calculation that is made simpler
if one lets $S=x_1-s$, $T = \sqrt{x_2^2+h^2}$ then puts the term over
a common denominator.  Finally, one shows that the numerator is
positive by isolating the square roots (absolute values), squaring,
and simplifying to infer that, since $4T^2\alpha^2>0$, the lemma is
true. 
\epr

Now returning to the proof of the Proposition \ref{St-line left projection}, we have that $\det(\pi_{L})_{*}=0$ if and only if $x_{2}=0$. Hence $\det(\pi_{L})_{*}$ vanishes on the set $\Sigma$ and Lemma \ref{St-line-nonvanishing lemma} again shows that $\D(\det(\pi_{L})_{*})$ on $\Sigma$ is non-vanishing. This implies that $\pi_{L}$ drops rank simply by one on $\Sigma$.

Now it remains to show that $T\Sigma\cap \mbox{Kernel}(\pi_{L})_{*}=\{0\}$. But this follows from the fact that $\mbox{Kernel}(\pi_{L})_{*}=\mbox{span}(\frac{\PD}{\PD x_{2}})$, but $T\Sigma=\mbox{span}(\frac{\PD}{\PD x_{1}},\frac{\PD}{\PD s},\frac{\PD}{\PD \omega})$. This concludes the proof of the proposition.
\end{proof}

\begin{proposition}\label{prop:piR}
 Consider the projection $\pi_{R}:T^{*}Y\times T^{*}X\to T^{*}X$. The restriction of the projection to $\Lambda$ has a blowdown singularity on $\Sigma$.
\end{proposition}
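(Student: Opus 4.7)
The plan is to mirror the structure of the proof of Proposition \ref{St-line left projection}, verifying the conditions in Definition \ref{def:fold-blowdown}(2): (i) $\pi_R$ is a local diffeomorphism off $\Sigma$, (ii) $\pi_R$ drops rank simply by one on $\Sigma$, (iii) $\ker d\pi_R|_\Sigma\subset T\Sigma$, and (iv) $\pi_R|_\Sigma$ has one-dimensional fibers.

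First I would write the Jacobian $(\pi_R)_*$ in the global coordinates $(x_1,x_2,s,\omega)$ on $\Lambda$. The first two components of $\pi_R$ are simply $x_1$ and $x_2$, so the first two rows of $(\pi_R)_*$ are $(1,0,0,0)$ and $(0,1,0,0)$, and $\det(\pi_R)_*$ reduces to the $2\times 2$ determinant of $\PD_{(s,\omega)}(\xi_1,\xi_2)$ formed from the last two rows. Since $\xi_1$ and $\xi_2$ depend on $x_1$ and $s$ only through the combination $x_1-s$, one has $\PD_s=-\PD_{x_1}$ on these components. Using this identity and the explicit expressions for $\xi_1,\xi_2$ in \eqref{Bi-static Twisted Canonical Relation}, a direct calculation shows that $\det(\pi_R)_*$ coincides with $\det(\pi_L)_*$ up to sign. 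By Lemma \ref{St-line-nonvanishing lemma}, $\det(\pi_R)_*$ vanishes precisely on $\Sigma$, and $\D(\det(\pi_R)_*)|_\Sigma$ is nonzero, settling (i) and (ii).

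For (iii) and (iv), I would examine the Jacobian on $\Sigma$. Since $\xi_2=-\frac{\omega x_2}{c_0}(|x-\g_T(s)|^{-1}+|x-\g_R(s)|^{-1})$ has $x_2$ as a multiplicative factor, all of $\PD_{x_1}\xi_2$, $\PD_s\xi_2$ and $\PD_\omega\xi_2$ vanish on $\Sigma$, whereas $\PD_{x_2}\xi_2|_\Sigma=-\frac{\omega}{c_0}(|x-\g_T(s)|^{-1}+|x-\g_R(s)|^{-1})\neq 0$. Combined with the first two rows, this forces any $v\in\ker d\pi_R|_\Sigma$ to have zero $x_1$- and $x_2$-components, after which the third row $\PD\xi_1$ imposes a single linear relation on the $s$- and $\omega$-components. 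Hence $\ker d\pi_R|_\Sigma$ is one-dimensional and contained in $\mbox{span}(\PD_s,\PD_\omega)\subset T\Sigma$, verifying (iii). For (iv), parameterize $\Sigma$ by $(x_1,s,\omega)$; then $\pi_R|_\Sigma(x_1,s,\omega)=(x_1,0,\xi_1(x_1,s,\omega),0)$ with $\xi_1$ linear in $\omega$. The coefficient of $\omega$ here is exactly the expression that vanishes only at the points excluded by the cutoff $g$, so on the relevant open set the image is two-dimensional (parameterized by $(x_1,\xi_1)$) and the fibers are therefore one-dimensional.

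The main obstacle is the algebraic verification in the second paragraph that the $(s,\omega)$-minor of $(\pi_R)_*$ reduces, after cancellation, to the same expression up to sign as $\det(\pi_L)_*$ from Proposition \ref{St-line left projection}. Once this identity is established, Lemma \ref{St-line-nonvanishing lemma} delivers items (i) and (ii), and the remaining items (iii) and (iv) reduce to straightforward linear algebra on $\Sigma$ together with the effect of the cutoff $g$.
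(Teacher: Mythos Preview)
Your proposal is correct and follows the same approach as the paper, which likewise writes out $(\pi_R)_*$, observes $\det((\pi_R)_*)=\det((\pi_L)_*)$, and reads off $\ker(\pi_R)_*\subset T\Sigma$ directly from the matrix on $\Sigma$. Your ``main obstacle'' is in fact immediate: since $\xi_j=-\tfrac{\omega}{c_0}\PD_{x_j}R$, $\sigma=-\tfrac{\omega}{c_0}\PD_s R$, and $t=\tfrac{1}{c_0}R$, equality of mixed partials gives $\PD_s\xi_j=\PD_{x_j}\sigma$ and $\PD_\omega\xi_j=-\PD_{x_j}t$, so the relevant $2\times2$ minors agree up to sign without any real computation.
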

\begin{proof}
We have
\begin{equation}\label{Right Projection}
\pi_{R}(x_{1},x_{2},s,\omega)=
\end{equation}
\begin{align*}
=\Bigg{(}x_{1},x_{2},-\frac{\omega}{c_{0}}\lb \frac{x_{1}-s-\A}{|x-\g_{T}(s)|}+\frac{x_{1}-s-\A}{|x-\g_{R}(s)|}\rb,
-\frac{\omega}{c_{0}}\lb \frac{x_{2}}{|x-\g_{T}(s)|}+\frac{x_{2}}{|x-\g_{R}(s)|}\rb\Bigg{)}.
\end{align*}

Now 
\begin{equation}
\nonumber (\pi_{R})_{*}=
\left(
\begin{matrix}
 1&0&0&0\\
0&1&0&0\\
*&*&-\frac{\omega}{c_{0}}\lb
\frac{x_{2}^{2}+h^{2}}{|x-\g_{T}(s)|^{3}}+\frac{x_{2}^{2}+h^{2}}{|x-\g_{R}(s)|^{3}}\rb&
-\frac{1}{c_{0}}\lb \frac{x_{1}-s-\A}{|x-\g_{T}(s)|}+\frac{x_{1}-s+\A}{|x-\g_{R}(s)|}\rb\\
*&*&\frac{\omega}{c_{0}}\lb
\frac{(x_{1}-s-\A)x_{2}}{|x-\g_{T}(s)|^{3}}+\frac{(x_{1}-s+\A)x_{2}}{|x-\g_{R}(s)|^{3}}\rb&
-\frac{1}{c_{0}}\lb
\frac{x_{2}}{|x-\g_{T}(s)|}+\frac{x_{2}}{|x-\g_{R}(s)|}\rb
\end{matrix}
\right)
\end{equation}
From this we see that $\mbox{Kernel}(\pi_{R})_{*} \subset T\Sigma$. Since $\det((\pi_{R})_{*})=\det((\pi_{L})_{*})$, $\pi_{R}$ drops rank simply by one along $\Sigma$. Therefore the projection $\pi_{R}$ has a blowdown singularity along $\Sigma$.
\end{proof}
 We summarize what we have proved in this section by the following theorem:
\begin{theorem}\label{Theorem-1}
The operator $F$ defined in \eqref{Bi-static FIO} is a Fourier integral operator of order $3/2$. The canonical relation $\Lambda$ associated to $F$ defined in \eqref{Bi-static Twisted Canonical Relation} satisfies the following: The projections $\pi_{L}$ and $\pi_{R}$ defined in \eqref{Left Projection} and \eqref{Right Projection} are a fold and blowdown respectively. 
\end{theorem}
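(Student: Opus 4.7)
The plan is to observe that Theorem \ref{Theorem-1} is a consolidation of three results already proved in this section, namely Propositions \ref{prop:C}, \ref{St-line left projection}, and \ref{prop:piR}. So I would write a very short proof that simply invokes each in turn, and the real content of the theorem lies entirely in those three propositions.

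First I would cite Proposition \ref{prop:C} to conclude that $F$ is an FIO of order $3/2$ with canonical relation $\Lambda$ as in \eqref{Bi-static Twisted Canonical Relation}, globally parameterized by $(x_1,x_2,s,\omega)$. Nothing further is required, since the proposition was a direct application of H\"ormander's theory to the nondegenerate phase $\psi$ in \eqref{def:psi} together with the order-two amplitude bound \eqref{Amplitude Estimate}.

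Next I would invoke Proposition \ref{St-line left projection} to conclude that $\pi_L|_\Lambda$ is a Whitney fold along $\Sigma=\{(x_1,0,s,\omega) : \omega\neq 0\}$. That proposition already verified both clauses of Definition \ref{def:fold-blowdown}(1): the determinant computation together with Lemma \ref{St-line-nonvanishing lemma} shows $(\pi_L)_*$ drops rank by exactly one on $\Sigma$ and nowhere else, while the identification $\ker(\pi_L)_* = \mathrm{span}(\partial/\partial x_2)$ against $T\Sigma = \mathrm{span}(\partial/\partial x_1, \partial/\partial s, \partial/\partial \omega)$ supplies the transversality $\ker(\pi_L)_*(m_0)\not\subset T_{m_0}\Sigma$. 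Similarly, Proposition \ref{prop:piR} establishes the blowdown property: since $\det(\pi_R)_* = \det(\pi_L)_*$, the rank-drop locus is again $\Sigma$, and reading off $\ker(\pi_R)_*$ from the matrix displayed in that proof confirms $\ker(\pi_R)_* \subset T\Sigma$, which is exactly Definition \ref{def:fold-blowdown}(2).

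There is essentially no obstacle, since the theorem is a packaging statement rather than a new result. The only subtleties to check are cosmetic: that $\Sigma$ is a smooth hypersurface of $\Lambda$ (immediate from the global chart, since it is cut out by the single equation $x_2=0$) and that the same $\Sigma$ serves for both projections (which is what the common determinant formula guarantees). With those observations in hand the proof reduces to a single sentence assembling the three propositions.
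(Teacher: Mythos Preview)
Your proposal is correct and matches the paper's approach exactly: the paper introduces Theorem \ref{Theorem-1} with the sentence ``We summarize what we have proved in this section by the following theorem'' and provides no separate proof, treating it purely as a packaging of Propositions \ref{prop:C}, \ref{St-line left projection}, and \ref{prop:piR}. Your observation that the content lies entirely in those three propositions is precisely the point.
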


\section{Image Reconstruction}\label{sect:Imaging}
Next, we study the composition of $F$ with $F^{*}$. 
This composition is given as follows:
\begin{align*}
 F^{*}FT(x)=\int & e^{\I\lb \omega(t-\frac{1}{c_{0}}(|x-\g_{R}(s)|+|x-\g_{R}(s)|))- \wt{\omega}(t-\frac{1}{c_{0}}(|y-\g_{T}(s)|+|y-\g_{R}(s)|))\rb}\\
 &\times\overline{A(x,s,t,\omega)}A(y,s,t,\wt{\omega})\D s \D t\D \omega\D \wt{\omega}\D y
\end{align*}
After an application of the method of stationary phase, we can write the kernel of the operator $F^{*}F$ as 
\[
K(x,y)= \int e^{\I \frac{\omega}{c_{0}}\lb |y-\g_{T}(s)|+|y-\g_{R}(s)|-(|x-\g_{T}(s)|+|x-\g_{R}(s)|)\rb}\wt{A}(x,y,s,\omega)\D s \D \omega.
\]

Therefore the phase function of the kernel $K(x,y)$ is
\bel{def:phi}
\phi(x,y,s,\omega)=\frac{\omega}{c_{0}}\lb |y-\g_{T}(s)|+|y-\g_{R}(s)|-(|x-\g_{T}(s)|+|x-\g_{R}(s)|)\rb.
\ee 

\begin{proposition}\label{Wavefront set of Kernel}
\[
WF(K)'\subset \Delta\cup\Lambda
\]
where
$\Delta:=\{(x_{1},x_{2},\xi_{1},\xi_{2};x_{1},x_{2},\xi_{1},\xi_{2})\}$ and 
$\Lambda:=\{(x_{1},x_{2},\xi_{1},\xi_{2};x_{1},-x_{2},\xi_{1},-\xi_{2})\}$. Here for a point $x=(x_{1},x_{2})$, the covectors $(\xi_{1},\xi_{2})$ are non-zero multiples of the vector $(-\PD_{x_{1}}R(s,x),-\PD_{x_{2}}R(s,x))$, where $R$ is defined in \eqref{Bi-static distance}.
\end{proposition}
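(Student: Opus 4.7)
The plan is a direct application of the wavefront calculus for oscillatory integrals. Since $K$ is represented as
\[
K(x,y)=\int e^{\I\phi(x,y,s,\omega)}\widetilde{A}(x,y,s,\omega)\,\D s\,\D\omega
\]
with $\phi$ from \eqref{def:phi}, the standard non-stationary phase estimate yields
\[
WF(K)\subset\Bigl\{\bigl(x,y,\partial_{x}\phi,\partial_{y}\phi\bigr) \st \partial_{s}\phi=\partial_{\omega}\phi=0,\ \omega\neq 0\Bigr\}.
\]
Setting these derivatives to zero, $\partial_{\omega}\phi=0$ is the isorange equation $R(s,x)=R(s,y)$, while $\partial_{s}\phi=0$ (for $\omega\neq 0$) is the isodoppler equation $\partial_{s}R(s,x)=\partial_{s}R(s,y)$.

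The key step is to deduce from these two equations that $x_{1}=y_{1}$ and $y_{2}=\pm x_{2}$. Because the trajectories $\g_{T}$ and $\g_{R}$ are translates of each other along the $z_{1}$-axis, both $R(s,z)$ and $\partial_{s}R(s,z)$ depend on $z$ only through the pair $(z_{1}-s,\,z_{2}^{2})$. Setting $u=z_{1}-s$, $w=z_{2}^{2}$, and
\[
F(u,w)=\sqrt{(u-\A)^{2}+w+h^{2}}+\sqrt{(u+\A)^{2}+w+h^{2}},\qquad G(u,w)=-\partial_{u}F,
\]
the critical system becomes $F(u_{x},w_{x})=F(u_{y},w_{y})$ and $G(u_{x},w_{x})=G(u_{y},w_{y})$. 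A direct computation gives $\partial_{u}G=-(w+h^{2})(p^{-3}+q^{-3})<0$ with $p,q$ as in Lemma \ref{St-line-nonvanishing lemma}, and an algebraic manipulation in the same spirit as that lemma shows that the Jacobian of $(u,w)\mapsto(F,G)$ is nonvanishing on the support of the amplitude. Combined with a global monotonicity/properness argument on the region left after the cutoff $g$ removes the nadir point, this yields global injectivity of $(u,w)\mapsto(F,G)$, hence $u_{x}=u_{y}$ and $w_{x}=w_{y}$, i.e.\ $x_{1}=y_{1}$ and $y_{2}=\pm x_{2}$.

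It then remains to compute the covectors on the two branches and apply the twist $(x,y;\xi,\eta)\mapsto(x,\xi;y,-\eta)$. Differentiating $\phi$ gives $\partial_{x}\phi=-(\omega/c_{0})\nabla_{x}R(s,x)$ and $\partial_{y}\phi=(\omega/c_{0})\nabla_{y}R(s,y)$, both nonzero when $\omega\neq 0$. On the branch $y=x$ these are opposite, and after the twist one obtains points of $\Delta=\{(x,\xi;x,\xi)\}$ with $\xi$ a nonzero multiple of $(-\partial_{x_{1}}R(s,x),-\partial_{x_{2}}R(s,x))$. On the branch $y_{1}=x_{1}$, $y_{2}=-x_{2}$, the evenness of $|z-\g_{T}(s)|$ and $|z-\g_{R}(s)|$ in $z_{2}$ yields $\partial_{y_{1}}\phi=-\partial_{x_{1}}\phi$ and $\partial_{y_{2}}\phi=\partial_{x_{2}}\phi$; the twist then produces points of $\Lambda=\{(x_{1},x_{2},\xi_{1},\xi_{2};x_{1},-x_{2},\xi_{1},-\xi_{2})\}$ with the same expression for $\xi$. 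Combining the two cases gives the stated inclusion $WF(K)'\subset\Delta\cup\Lambda$.

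The main obstacle in this plan is the global injectivity of $(u,w)\mapsto(F,G)$; the infinitesimal statement is parallel to Lemma \ref{St-line-nonvanishing lemma}, but promoting it to a global statement that rules out distant pairs of solutions requires a careful monotonicity analysis that exploits the cutoff $g$ keeping us away from the degenerate nadir set. Once that is in hand, the identification of $\Delta$ and $\Lambda$ and the verification of the twist are routine computations.
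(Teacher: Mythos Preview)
Your overall strategy—compute the critical set of the phase $\phi$, reduce to the isorange/isodoppler system, and then read off the two branches after the twist—is exactly what the paper does. The difference lies in how you solve the system. The paper passes to prolate spheroidal coordinates centred at the foci $\gamma_T(s),\gamma_R(s)$: in those coordinates the isorange equation becomes $\cosh\rho=\cosh\rho'$, and after substituting this the isodoppler equation collapses to
\[
\frac{\cos\theta}{\cosh^{2}\rho-\cos^{2}\theta}=\frac{\cos\theta'}{\cosh^{2}\rho-\cos^{2}\theta'},
\]
which is visibly strictly monotone in $\cos\theta$ on $[-1,1]$ (since $\cosh\rho>1$). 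Thus $\rho=\rho'$ and $\cos\theta=\cos\theta'$, and $x_{1}=y_{1}$, $x_{2}=\pm y_{2}$ drop out immediately—no separate global-injectivity step is needed.

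Your route via the map $(u,w)\mapsto(F,G)$ is sound but incomplete at exactly the point you flag. Two remarks. First, the cutoff $g$ plays no role in the injectivity of $(F,G)$. A short computation shows that, up to a positive factor, the Jacobian of $(F,G)$ equals $|x-\gamma_T|\,|x-\gamma_R|+(x_1-s)^2+x_2^2+h^2-\alpha^2$, precisely the quantity shown positive in Lemma~\ref{St-line-nonvanishing lemma}; it is nonzero even at the nadir. The cutoff $g$ is introduced so that $\partial_x\psi\neq0$ and $F$ is a genuine FIO, not to rescue injectivity here. Second, the gap can be closed without prolate coordinates: since $\partial_w F>0$, each level set $\{F=F_0,\ w\ge 0\}$ is a connected arc that is a graph over $u$, and along it
\[
\frac{dG}{du}\Big|_{F=F_0}=\frac{\partial_u G\,\partial_w F-\partial_w G\,\partial_u F}{\partial_w F}=-\frac{\det D(F,G)}{\partial_w F}<0.
\]
Hence $G$ is strictly monotone on each isorange arc and $(F,G)$ is globally injective on $\mathbb{R}\times[0,\infty)$. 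With that step filled in, your argument is correct and equivalent to the paper's; the prolate spheroidal change of variables simply makes this monotonicity explicit from the outset and avoids the detour through the Jacobian.
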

\begin{proof}
Using the H\"{o}rmander-Sato Lemma, we have 
\begin{equation*}
WF(K)'\subset
\end{equation*}
\begin{align}
\subset  
\notag\Bigg{\{}\NT&\Bigg{(}x_{1},x_{2},-\frac{\omega}{c_{0}}\NT\lb \frac{x_{1}-s-\A}{|x-\g_{T}(s)|}+\frac{x_{1}-s+\A}{|x-\g_{R}(s)|}\rb,-\frac{\omega}{c_{0}}\NT\lb \frac{x_{2}}{|x-\g_{T}(s)|}+\frac{x_{2}}{|x-\g_{R}(s)|}\rb\NT\NT\Bigg{)};\\
\notag &\NT\Bigg{(} y_{1},y_{2},-\frac{\omega}{c_{0}}\NT\lb \frac{y_{1}-s-\A}{|y-\g_{T}(s)|}+\frac{y_{1}-s+\A}{|y-\g_{R}(s)|}\rb,-\frac{\omega}{c_{0}}\NT\lb \frac{y_{2}}{|y-\g_{T}(s)|}+\frac{y_{2}}{|y-\g_{R}(s)|}\rb\NT\NT\Bigg{)}:\\
\notag &|x-\g_{T}(s)|+|x-\g_{R}(s)|=|y-\g_{T}(s)|+|y-\g_{R}(s)|,\\
\notag &\frac{x_{1}-s-\A}{|x-\g_{T}(s)|}+\frac{x_{1}-s+\A}{|x-\g_{R}(s)|}=\frac{y_{1}-s-\A}{|y-\g_{T}(s)|}+\frac{y_{1}-s+\A}{|y-\g_{R}(s)|},\quad\omega\neq 0\Bigg{\}}.
\end{align}
We now obtain a relation between $(x_{1},x_{2})$ and $(y_{1},y_{2})$. This is given by the following lemma.
\begin{lemma}
For all $s$, the set of all $(x_{1},x_{2})$, $(y_{1},y_{2})$ that satisfy
\begin{align}
 \label{Wavefront set relation Eqn-1} &|x-\g_{T}(s)|+|x-\g_{R}(s)|=|y-\g_{T}(s)|+|y-\g_{R}(s)|,\\
\label{Wavefront set relation Eqn-2}&\frac{x_{1}-s-\A}{|x-\g_{T}(s)|}+\frac{x_{1}-s+\A}{|x-\g_{R}(s)|}=\frac{y_{1}-s-\A}{|y-\g_{T}(s)|}+\frac{y_{1}-s+\A}{|y-\g_{R}(s)|}.
\end{align}
necessarily satisfy the following relations:  $x_{1}=y_{1}$ and $x_{2}=\pm y_{2}$. 
\end{lemma}
\begin{proof}
In order to show this,  we will consider \eqref{Wavefront set
relation Eqn-1} and \eqref{Wavefront set relation Eqn-2} as functions
of $\Rb^{3}$ by replacing $h$ in these expressions with $x_{3}-h$. We then transform these expressions using the coordinates \eqref{Prolate Spheroidal Coordinate System} and then set $x_{3}=y_{3}=0$ to prove the lemma.

 Consider the following change of coordinates:
\begin{equation}\label{Prolate Spheroidal Coordinate System}
\begin{array}{ll}
x_{1}=s+\A \cosh \rho \cos \theta & y_{1}=s+\A \cosh \rho' \cos \theta'\\
x_{2}=\A \sinh \rho \sin \theta \cos \vp & y_{2}=\A \sinh \rho' \sin \theta' \cos \vp'\\
x_{3}=h+ \A\sinh \rho \sin \theta \sin \vp & y_{3}=h+ \A\sinh \rho' \sin \theta' \sin \vp'
\end{array}
\end{equation}
where $s$, $\A>0$ and $h>0$ are fixed and $\rho\in[0,\infty)$,
$\theta\in[0,\pi]$ and $\vp\in[0,2\pi)$. This a well-defined
coordinate system except for $\xi=0$ and $\theta=0,\pi$. 

In the coordinate system \eqref{Prolate Spheroidal Coordinate System}, we have 
\begin{equation}\label{reln:x-gt}
\begin{array}{ll}
 |x-\g_{T}(s)|=\A(\cosh \rho-\cos\theta),\quad&
 |x-\g_{R}(s)|=\A(\cosh \rho+\cos\theta),\\
\frac{x_{1}-s-\A}{|x-\g_{T}(s)}=\frac{\cosh \rho\cos \theta-1}{\cosh \rho-\cos \theta},\quad&\frac{x_{1}-s+\A}{|x\g_{R}(s)|}=\frac{\cosh \rho\cos \theta+1}{\cosh \rho+\cos \theta}.
\end{array}
\end{equation}
The terms involving $y$ are obtained similarly.
Now \eqref{Wavefront set relation Eqn-1} and \eqref{Wavefront set relation Eqn-2} transform as follows:
\begin{align*}& 2\cosh \rho=2\cosh \rho'\\
& \frac{\cosh \rho\cos \theta-1}{\cosh \rho-\cos \theta}+\frac{\cosh \rho\cos \theta+1}{\cosh\rho+\cos \theta}=\frac{\cosh \rho'\cos \theta'-1}{\cosh \rho'-\cos \theta'}+\frac{\cosh \rho'\cos \theta'+1}{\cosh\rho'+\cos \theta'}.
\end{align*}
Using the first equality in the second equation, we have 
\[
\frac{\cos \theta}{\cosh^{2}\rho-\cos^{2}\theta}=\frac{\cos\theta'}{\cosh^{2}\rho-\cos^{2}\theta'}.
\]
This gives $\cos \theta=\cos \theta'$. Therefore $\theta=2n\pi\pm \theta'$, which then gives $\sin \theta=\pm \sin \theta'$. Therefore, in terms of $(x_{1},x_{2})$ and $(y_{1},y_{2})$, we have 
$x_{1}=y_{1}$ and $x_{2}=\pm y_{2}$. 
\end{proof}
Now to finish the proof of the proposition, when $x_{1}=y_{1}$ and $x_{2}=y_{2}$, there is contribution to $WF(K)'$ contained in the diagonal set $\Delta:=\{(x_{1},x_{2},\xi_{1},\xi_{2};x_{1},x_{2},\xi_{1},\xi_{2})\}$ and when $x_{1}=y_{1}$ and $x_{2}=-y_{2}$, we have a contribution to $WF(K)'$ contained in $\Lambda$, where 
$\Lambda:=\{(x_{1},x_{2},\xi_{1},\xi_{2};x_{1},-x_{2},\xi_{1},-\xi_{2})\}$.  
\end{proof}

 We use the following convention in the theorem below. The cotangent
variables corresponding to $x$ and $y$ are denoted as $\xi$ and $\eta$
respectively. Then note that $\xi=\PD_{x}\phi$ and
$\eta=-\PD_{y}\phi$. 

We now prove the following theorem: 
\begin{theorem}\label{Theorem-2}
Let $F$ be given by \eqref{Bi-static FIO}. Then $F^{*}F\in I^{3,0}(\Delta,\Lambda)$. 
\end{theorem}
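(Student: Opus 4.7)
The plan is to follow the strategy of \cite[Theorem 1.6]{Felea}. The necessary microlocal structure is already assembled: Theorem \ref{Theorem-1} establishes that $\pi_L$ is a Whitney fold and $\pi_R$ is a blowdown, both dropping rank along the common hypersurface $\Sigma = \{x_2 = 0\}$, and Proposition \ref{Wavefront set of Kernel} shows that $WF(K)' \subset \Delta \cup \Lambda$. What remains is to identify $(\Delta, \Lambda)$ as a cleanly intersecting pair of Lagrangians in the sense of Example \ref{ex:model} and to read off the correct orders.

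The first task is to verify clean intersection. Using the global parametrization of $\Lambda_F$ coming from Proposition \ref{prop:C} via $(x_1, x_2, s, \omega)$, together with the description of $\Lambda$ from Proposition \ref{Wavefront set of Kernel} (so $y = (x_1, -x_2)$), one sees that $\Delta \cap \Lambda$ is cut out by the equations $x_2 = 0$ and $\xi_2 = 0$. I would compute the tangent spaces to $\Delta$ and $\Lambda$ directly in these coordinates and check $T(\Delta \cap \Lambda) = T\Delta \cap T\Lambda$, confirming clean intersection and determining the codimension that governs the model form.

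The second task is to invoke the general composition principle: for an FIO $F$ of order $m$ whose canonical relation has $\pi_L$ a Whitney fold and $\pi_R$ a blowdown over the same hypersurface $\Sigma$, the composition $F^{*}F$ lies in $I^{2m,0}(\Delta,\Lambda)$, where $\Lambda$ is the flow-out of $\Sigma$. With $m = 3/2$ from Proposition \ref{prop:C}, this yields the claimed $I^{3,0}(\Delta,\Lambda)$ membership; the vanishing of $l$ reflects the precise cancellation between the fold on the left and the blowdown on the right, and the value $p=3$ is the expected diagonal order $2m$ for a second-order microlocal composition in dimension two.

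The main obstacle is the explicit reduction of the pair $(\Delta,\Lambda)$ to the model pair $(\tilde\Lambda_0,\tilde\Lambda_1)$ of Example \ref{ex:model}, which is where the fold/blowdown geometry is genuinely used. I would construct the required canonical transformation by exploiting the involution $x_2 \mapsto -x_2$ (which exchanges the two sheets of $\pi_L$ across the fold hypersurface), likely expressed through the prolate spheroidal coordinates \eqref{Prolate Spheroidal Coordinate System} that already simplified the wavefront analysis in Proposition \ref{Wavefront set of Kernel}. After the reduction one must verify the product-type symbol estimates $S^{p',l'}$ with $p' = p - 1/2 = 5/2$ and $l' = l - 1/2 = -1/2$ for the amplitude of $K$ coming from stationary phase in $(t,\tilde\omega)$; this final step is a technical but routine calculation, and is the role played by the identities collected in the Appendix.
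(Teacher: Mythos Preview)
Your outline diverges from the paper's argument in a way that matters. You propose to (i) reduce $(\Delta,\Lambda)$ explicitly to the model pair of Example \ref{ex:model} via a canonical transformation, and (ii) verify product-type symbol estimates for the amplitude of $K$; you describe the Appendix identities as serving step (ii). The paper does neither of these things. Instead it uses the iterated-regularity characterization of $I^{p,l}$ from \cite[Proposition 1.35]{GU1990a}: one writes down generators $\tilde p_1,\dots,\tilde p_6$ of the ideal of functions vanishing on $\Delta\cup\Lambda$, forms first-order $\Psi$DOs $P_i$ with principal symbols $q_i\tilde p_i$, and shows $P_iK\in H^{s_0}_{\mathrm{loc}}$ for each $i$. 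The entire point of the Appendix is to express each $\tilde p_i(x,y,\partial_x\phi,-\partial_y\phi)$ as a smooth combination of $\partial_s\phi$ and $\partial_\omega\phi$ with the correct $\omega$-homogeneities, so that a single integration by parts in $(s,\omega)$ shows $P_iK$ has an amplitude of the same order as $K$ itself. No symbol-class estimate in model coordinates is ever checked, and no canonical transformation is ever constructed.

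Your ``general composition principle'' (fold on the left, blowdown on the right $\Rightarrow F^*F\in I^{2m,0}$) is morally what \cite{Felea} proves, but the paper does not simply cite it as a black box; it reruns the argument because the identities \eqref{Identity 1}--\eqref{Identity 6} are specific to this phase function and require the prolate spheroidal computation. Your determination of $(p,l)=(3,0)$ is right in spirit, and the paper obtains it the same way you suggest in the end: away from $\Sigma$ the transverse intersection calculus applies, so $F^*F$ is an FIO of order $3$ on both $\Delta\setminus\Lambda$ and $\Lambda\setminus\Delta$, forcing $p+l=3$ and $p=3$.

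In short: your plan to build the model-form reduction and check $S^{p',l'}$ estimates directly could in principle succeed, but it is both harder and not what the Appendix supports. The actionable fix is to replace steps (i)--(ii) with the $P_iK\in H^{s_0}_{\mathrm{loc}}$ argument via the ideal generators and integration by parts.
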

\begin{proof}
We follow the proof of \cite[Theorem 1.6]{Felea} closely to prove this result. Recall that $\Delta$ and $\Lambda$ are defined by 
\begin{align*}
&\Delta=\{x_{1}-y_{1}=x_{2}-y_{2}=\xi_{1}-\eta_{1}=\xi_{2}-\eta_{2}=0\},\\
&\Lambda=\{x_{1}-y_{1}=x_{2}+y_{2}=\xi_{1}-\eta_{1}=\xi_{2}+\eta_{2}=0\}.
\end{align*}

The ideal of functions that vanish on $\Delta\cup \Lambda$ is generated by 
\begin{align*}
&\wt{p}_{1}=x_{1}-y_{1},\quad \wt{p}_{2}=x_{2}^{2}-y_{2}^{2},\quad \wt{p}_{3}=\xi_{1}-\eta_{1},\quad \wt{p}_{4}=(x_{2}+y_{2})(\xi_{2}-\eta_{2}),\\
&\wt{p}_{5}=(x_{2}-y_{2})(\xi_{2}+\eta_{2}),\quad \wt{p}_{6}=\xi_{2}^{2}-\eta_{2}^{2}.
\end{align*}
Let $p_{i}=q_{i}\wt{p}_{i}$, for $1\leq i\leq 6$, where $q_{1}, q_{2}$ are homogeneous of degree $1$ in $(\xi,\eta)$, $q_{3},q_{4}$ and $q_{5}$ are homogeneous of degree $0$ in $(\xi,\eta)$  and $q_{6}$ is homogeneous of degree $-1$ in $(\xi,\eta)$. Let $P_{i}$ be pseudodifferential operators with principal symbols $p_{i}$ for $1\leq i\leq 6$.

In order to prove that $F^{*}F\in I^{p,l}(\Delta,\Lambda)$, for some $p,l$, we have to show that $P_{i}K\in H^{s_{0}}_{\mathrm{loc}}$ for some $s_{0}$, for $1\leq i\leq 6$. By \cite[Proposition 4.3.1]{Duistermaat}, we have the following (up to lower order terms):
\[
P_{i}K(x,y)=\int e^{\I \frac{\omega}{c_{0}}\lb R(s,y)-R(s,x)\rb}\wt{A}(x,y,s,\omega)p_{i}(x,y,-\PD_{x}R(s,x),-\PD_{y}R(s,y))\D s\D \omega 
\]
We show in the Appendix that each $\wt{p}_{i}$ can be expressed in the following forms: 
\begin{align}
\label{Identity 1}&\wt{p}_{1}=\frac{f_{11}(x,y,s)}{\omega}\PD_{s}\phi+f_{12}(x,y,s)\PD_{\omega}\phi\\
\label{Identity 2}&\wt{p}_{2}=\frac{f_{21}(x,y,s)}{\omega}\PD_{s}\phi+f_{22}(x,y,s)\PD_{\omega}\phi\\
\label{Identity 3}&\wt{p}_{3}= -\PD_{s}\phi\\
\label{Identity 4}&\wt{p}_{4}=f_{41}(x,y,s)\PD_{s}\phi+\omega f_{42}(x,y,s)\PD_{\omega}\phi\\
\label{Identity 5}&\wt{p}_{5}=f_{51}(x,y,s)\PD_{s}\phi+\omega f_{52}(x,y,s)\PD_{\omega}\phi\\
\label{Identity 6}&\wt{p}_{6}=\omega f_{61}(x,y,s)\PD_{s}\phi+\omega^{2}f_{62}(x,y,s)\PD_{\omega}\phi.
\end{align}
Here $f_{ij}$ for $1\leq i\leq 6$ and $j=1,2$ are  smooth functions.

Therefore
\begin{align*}
P_{1}K(x,y)&=\int e^{\I \phi(x,y,s,\omega)}\wt{A}(x,y,s,\omega)q_{1}\lb \frac{f_{11}(x,y,s)}{\omega}\PD_{s}\phi+f_{12}(x,y,s)\PD_{\omega}\phi\rb \D s\D \omega\\
&=\int \PD_{s}\lb e^{\I \phi(x,y,s,\omega)}\rb\frac{q_{1}}{\I \omega}\wt{A}(x,y,s,\omega)f_{11}(x,y,s)\D s\D \omega\\
&+\int \PD_{\omega}\lb e^{\I \phi(x,y,s,\omega)}\rb\frac{q_{1}}{\I}\wt{A}(x,y,s,\omega)f_{12}(x,y,s)\D s\D \omega\\
\intertext{By integration by parts}
&=-\Bigg{\{}\int e^{\I \phi(x,y,s,\omega)}\PD_{s}\lb \frac{q_{1}}{\I \omega}\wt{A}(x,y,s,\omega)f_{11}(x,y,s)\rb \D s\D \omega\\
&+\int e^{\I \phi(x,y,s,\omega)}\PD_{\omega}\lb\frac{q_{1}}{\I}\wt{A}(x,y,s,\omega)f_{12}(x,y,s)\rb \D s\D \omega \Bigg{\}}.
\end{align*}
Note that $q_{1}$ is homogeneous of degree $1$ in $\omega$, and $\wt{A}$ is a symbol of order $4$, hence each amplitude term in the sum above is of order $4$.

Therefore by \cite[Theorem 2.2.1]{Duistermaat}, we have that $P_{1}K \in H^{s_{0}}_{\mathrm{loc}}$.

A similar argument works for each of the other five pseudodifferential operators. Hence by  \cite[Proposition 1.35]{GU1990a}, we have that $F^{*}F\in I^{p,l}(\Delta,\Lambda)$. Because $C$ is a local canonical graph away from $\Sigma$, the transverse intersection calculus applies for the composition $F^{*}F$ away from $\Sigma$. Hence $F^{*}F$ is of order $3$ on $\Delta\setminus \Sigma$ and $\Lambda\setminus \Sigma$. Since $F^{*}F$ is of order $p+l$ on $\Delta\setminus \Sigma$ and is of order $p$ on $\Lambda\setminus \Sigma$, we have that $p=3$ and $l=0$. Therefore the theorem is proved.

\end{proof}

\section*{Acknowledgments} The first named author
thanks Dave Isaacson, Margaret Cheney, Birsen Yaz{\i}c{\i} and Art
Weiss for discussions regarding this work while he was a post-doctoral fellow at RPI. Additionally, he thanks the
Department of Mathematics at Tufts University for providing a
wonderful research environment. Both authors thank Cliff Nolan, Raluca
Felea, Allan Greenleaf, and Gunther Uhlmann for stimulating
discussions about mathematics related to this research. 

\begin{appendix}\label{sect:Appendix}

\section{}

Here we prove the identities \eqref{Identity 1} through \eqref{Identity 6}
that are required in the proof of Theorem \ref{Theorem-2}.  For
convenience, and without loss of generality, we will assume $c_0 = 1$.

In obtaining these identities, it is easier to work in the coordinate
system defined in \eqref{Prolate Spheroidal Coordinate System}. We
will work with the extension $\wt{\phi}$ of the phase function $\phi$
to $\Rb^{3}$ defined by

\begin{align*}\label{Extended phase function}
\wt{\phi}=\frac{\omega}{c_{0}}&\Bigg{(}\sqrt{(y_{1}-s-\A)^{2}+y_{2}^{2}+(y_{3}-h)^{2}}+\sqrt{(y_{1}-s+\A)+y_{2}{2}+(y_{3}-h)^{2}}\\
&-\lb\sqrt{(x_{1}-s-\A)^{2}+x_{2}^{2}+(x_{3}-h)^{2}}+\sqrt{(x_{1}-s+\A)+x_{2}^{2}+(x_{3}-h)^{2}}\rb\Bigg{)}.
\end{align*}
Then, using the facts that
\begin{equation}\label{w-Der of Extended Phase}
\PD_{\omega}\wt{\phi}\vert_{x_{3}=y_{3}=0}=\PD_{\omega}\phi\quad \mbox{ and }\quad  \PD_{s}\wt{\phi}\vert_{x_{3}=y_{3}=0}=\PD_{s}\phi,
\end{equation}
 we set the third coordinate $x_{3}=y_{3}=0$  to obtain the required identities. 

\subsection{Expression for $x_{1}-y_{1}$}

We now obtain an expression for $x_{1}-y_{1}$ of the form 
\begin{equation}\label{Expression-1}
x_{1}-y_{1}=\frac{f_{11}(x,y,s)}{\omega}\PD_{s}\phi+f_{12}(x,y,s)\PD_{\omega}\phi,
\end{equation}
where $f_{11}$ and $f_{12}$ are smooth functions.

That is,  denoting $A_{1}=x_{1}-y_{1}$, we would like to obtain an expression of the form \eqref{Expression-1} involving $\PD_{s}\phi$ and $\PD_{\omega}\phi$ for 
\begin{equation}\label{Expression-1 in prolate}
A_{1}=\A(\cosh \rho\cos \theta-\cosh\rho'\cos\theta').
\end{equation} 

We have 
\[
\PD_{\omega}\wt{\phi}=2\A(\cosh \rho'-\cosh \rho)
\]
and

\begin{align}
\notag \PD_{s}\wt{\phi}=&-{\omega}\Bigg{(} \lb\frac{\cosh \rho'\cos \theta'-1}{\cosh \rho'-\cos \theta'}+\frac{\cosh \rho'\cos \theta'+1}{\cosh\rho'+\cos \theta'}\rb\\
\notag &-\left(\frac{\cosh \rho\cos \theta-1}{\cosh \rho-\cos \theta}+\frac{\cosh \rho\cos \theta+1}{\cosh\rho+\cos \theta}\right)\Bigg{)}\\
\label{s-Der of phi in prolate}&={2\omega}\lb \frac{\sinh^{2}\rho\cos \theta}{\cosh^{2}\rho-\cos^{2}\theta}- \frac{\sinh^{2}\rho'\cos
\theta'}{\cosh^{2}\rho'-\cos^{2}\theta'}\rb
\end{align}
using $\cosh^2\rho\cos\theta - \cos \theta = \sinh^2\rho \cos\theta$.  Now 

\begin{equation}\label{Eqn in A and B} \cos \theta \frac{\PD_{\omega}\wt{\phi}}{2}=\A \lb \cosh \rho' \cos \theta'-\cosh\rho\cos\theta\rb+\A \cosh\rho'\lb \cos \theta-\cos \theta'\rb.
\end{equation}
Then
\bel{equn:A1}
A_{1}=-\frac{\cos\theta}{2}\PD_{\omega}\wt{\phi}+\A\cosh\rho'(\cos \theta-\cos\theta').
\ee
Adding and subtracting
$\frac{\sinh^{2}\rho\cos\theta'}{\cosh^{2}\rho-\cos^{2}\theta'}
$  inside \eqref{s-Der of phi in prolate}, we have
\begin{align*}
\PD_{s}\wt{\phi}&={2\omega} \Bigg{(}\underbrace{\frac{\sinh^{2}\rho\cos\theta}{\cosh^{2}\rho-\cos^{2}\theta}-\frac{\sinh^{2}\rho\cos\theta'}{\cosh^{2}\rho-\cos^{2}\theta'}}_{I}\\
&+\underbrace{\frac{\sinh^{2}\rho\cos\theta'}{\cosh^{2}\rho-\cos^{2}\theta'}-\frac{\sinh^{2}\rho'\cos\theta'}{\cosh^{2}\rho'-\cos^{2}\theta'}}_{II}\Bigg{)}.
\end{align*}
Simplifying $I$ and $II$, we have,
\begin{align*}
I&= \frac{(\cos\theta-\cos\theta')(\sinh^{2}\rho)(\cosh^{2}\rho+\cos\theta\cos\theta')}{(\cosh^{2}\rho-\cos^{2}\theta)(\cosh^{2}\rho-\cos^{2}\theta')}
\end{align*}
and 
\begin{align*}
II&=\frac{\cos\theta'\sin^{2}\theta'(\cosh \rho-\cosh\rho')(\cosh \rho+\cosh \rho')}{(\cosh^{2}\rho'-\cos^{2}\theta')(\cosh^{2}\rho-\cos^{2}\theta')}\\
&= -\frac{\cos\theta'\sin^{2}\theta'(\cosh\rho+\cosh\rho')}{(\cosh^{2}\rho'-\cos^{2}\theta')(\cosh^{2}\rho-\cos^{2}\theta')}\cdot \frac{\PD_{\omega}\wt{\phi}}{2\A}
\end{align*} 
where we have used the fact $\sinh^2\rho - \sinh^2 \rho' =
\cosh^2\rho- \cosh^2\rho'$.

Using these calculations, we see
\begin{align}\label{Expression for B}
\cos\theta-\cos\theta'=&\lb \frac{(\cosh^{2}\rho-\cos^{2}\theta)(\cosh^{2}\rho-\cos^{2}\theta')}{(\cosh^{2}\rho-1)(\cosh^{2}\rho+\cos\theta\cos\theta')}\rb\\
\notag &\times\lb \frac{\PD_{s}\wt{\phi}}{2\omega}+\frac{\cos\theta'\sin^{2}\theta'(\cosh\rho+\cosh\rho')}{(\cosh^{2}\rho'-\cos^{2}\theta')(\cosh^{2}\rho-\cos^{2}\theta')}\cdot \frac{\PD_{\omega}\wt{\phi}}{2\A}\rb.
\end{align}
Now setting $x_{3}=y_{3}=0$ and using \eqref{equn:A1} , we have,
\begin{align*}
A_{1}=&\frac{\A\cosh\rho'(\cosh^{2}\rho-\cos^{2}\theta)(\cosh^{2}\rho-\cos^{2}\theta')}{(\cosh^{2}\rho-1)(\cosh^{2}\rho+\cos\theta\cos\theta')}\frac{\PD_{s}\phi}{2\omega}\\
& -\frac{1}{2}\lb \cos\theta-\frac{\cosh\rho'\cos\theta'\sin^{2}\theta'(\cosh \rho+\cosh\rho')}{(\cosh^{2}\rho-1)(\cosh^{2}\rho+\cos\theta\cos\theta')}\cdot \frac{(\cosh^{2}\rho-\cos^{2}\theta)}{(\cosh^{2}\rho'-\cos^{2}\theta')}\rb \PD_{\omega}\phi.
\end{align*}
We can see that no denominator in this expression is zero for $x_3 =
0$ (since $\cosh \rho > 1\geq\cos\theta$ if $x_3=0$) and so this
expression for $A_1$ is defined and smooth for all values of the
coordinates.

We can write $A_{1}$ in the Cartesian coordinate system as follows.
First, for simplicity, let
\begin{align}
\label{def:X1} &X_{1}=|x-\g_{T}(s)|=\sqrt{(x_{1}-s-\A)^{2}+x_{2}^{2}+h^{2}},\\
\label{def:X2}&X_{2}=|x-\g_{R}(s)|=\sqrt{(x_{1}-s+\A)^{2}+x_{2}^{2}+h^{2}},\\
\label{def:Y1}&Y_{1}=|y-\g_{T}(s)|=\sqrt{(y_{1}-s-\A)^{2}+y_{2}^{2}+h^{2}},\\
\label{def:Y2}&Y_{2}=|y-\g_{R}(s)|=\sqrt{(y_{1}-s+\A)^{2}+y_{2}^{2}+h^{2}}.
\end{align}

Then using these expressions and \eqref{reln:x-gt} we see that
\begin{align*}
A_{1}&= \frac{\A\lb\frac{Y_{1}+Y_{2}}{2\A}\rb\lb \frac{X_{1}X_{2}}{\A^{2}}\rb\lb \frac{(X_{1}+X_{2})^{2}}{4\A^{2}}-\frac{4(y_{1}-s)^{2}}{(Y_{1}+Y_{2})^{2}}\rb}
{2\omega\left( \frac{(X_{1}+X_{2})^{2}}{4\A^{2}}-1\right)\left(\frac{(X_{1}+X_{2})^{2}}{4\A^{2}}+\frac{4(x_{1}-s)(y_{1}-s)}{(X_{1}+X_{2})(Y_{1}+Y_{2})}\right)}\PD_{s}\phi\\
&\
 -\frac{1}{2}\left(\frac{2(x_{1}-s)}{X_{1}+X_{2}}-\frac{X_{1}X_{2}\left(\frac{2(y_{1}-s)}{Y_1+Y_2}\right)\left(1-\frac{4(y_{1}-s)^{2}}{(Y_{1}+Y_{2})^{2}}\right)
\lb\frac{X_{1}+X_{2}+Y_{1}+Y_{2}}{2\A}\rb}{Y_{1}Y_{2}\left(
\frac{(X_{1}+X_{2})^{2}}{4\A^{2}}-1\right)\left(\frac{(X_{1}+X_{2})^{2}}{4\A^2}+\frac{4(x_{1}-s)(y_{1}-s)}{(X_{1}+X_{2})(Y_{1}+Y_{2})}\right)}\right)\PD_{\omega}\phi.
\end{align*} We see from this second expression for $A_1$ that the
coefficient functions are smooth in $(x,y,s)$ since the expressions
\eqref{def:X1}-\eqref{def:Y2} are non-zero and smooth. 

\subsection{Expression for $x_{2}^{2}-y_{2}^{2}$}

Now we write $x_{2}^{2}-y_{2}^{2}$ in the form 
\begin{equation}\label{A2}
A_{2}:=x_{2}^{2}-y_{2}^{2}=\frac{f_{21}(x,y,s)}{\omega}\PD_{s}\phi+f_{22}(x,y,s)\PD_{\omega}\phi,
\end{equation}
where $f_{21}$ and $f_{22}$ are smooth functions.
  $A_{2}$ in the coordinate system \eqref{Prolate Spheroidal Coordinate System} is
 \begin{align}  
 \notag A_{2}:&=\A^{2}(\sinh^{2}\rho\sin^{2}\theta\cos^{2}\vp-\sinh^{2}\rho'\sin^{2}\theta'\cos^{2}\vp')\\
\label{first x2-y2 term}  &=\A^{2}(\sinh^{2}\rho\sin^{2}\theta-\sinh^{2}\rho'\sin^{2}\theta')\\
\label{second x2-y2 term} &+\A^{2}(\sinh^{2}\rho'\sin^{2}\theta'\sin^{2}\vp'-\sinh^{2}\rho\sin^{2}\theta\sin^{2}\vp).
 \end{align}
For $x_{3}=y_{3}=0$, \eqref{second x2-y2 term} is $0$.
So it is enough to obtain an expression of the form \eqref{A2} for \eqref{first x2-y2 term}, which we still denote by $A_{2}$.

Using the following identities, 
\[
\sinh^{2} \rho = 
\cosh^{2}\rho-1 \mbox{ and } \sin^{2}\theta=1-\cos^{2}\theta,
\]
we have,
\begin{align*}
A_{2}&=\A^{2}(\sinh^{2}\rho\sin^{2}\theta-\sinh^{2}\rho'\sin^{2}\theta')\\
&=\A^{2}\lb (\cosh^{2}\rho-\cosh^{2}\rho')+(\cos^{2}\theta-\cos^{2}\theta')-(\cosh^{2}\rho\cos^{2}\theta-\cosh^{2}\rho'\cos^{2}\theta')\rb\\
&= -\frac{\A}{2}(\cosh\rho+\cosh\rho')\PD_{\omega}\wt{\phi}+\A^{2}(\cos\theta+\cos\theta')(\cos\theta-\cos\theta')\\
&-\A(\cosh\rho\cos\theta+\cosh\rho'\cos\theta')A_{1}\\
&= -\frac{\A}{2}(\cosh\rho+\cosh\rho')\PD_{\omega}\wt{\phi}+\A^{2}(\cos\theta+\cos\theta')(\cos \theta-\cos\theta')\\
&-\A(\cosh\rho\cos\theta+\cosh\rho'\cos\theta')( -\frac{\cos\theta}{2}\PD_{\omega}\phi+\A \cosh\rho' (\cos\theta-\cos\theta'))\\
&=\frac{\A}{2}\Big{(} -(\cosh \rho+\cosh \rho')+(\cosh\rho\cos\theta+\cosh\rho'\cos\theta')\cos\theta\Big{)}\PD_{\omega}\wt{\phi}\\
&+\A^{2}\Big{(}(\cos\theta+\cos\theta')
-(\cosh\rho\cos\theta+\cosh\rho'\cos\theta')\cosh
\rho'\Big{)}(\cos\theta-\cos\theta').
\end{align*}
Now we use the expression for
$\cos\theta-\cos\theta'$ in Equation \eqref{Expression for B} and set
$x_{3}=y_{3}=0$; this shows that $x_{2}^{2}-y_{2}^{2}$ can be written
in the form 
\begin{equation*}
A_{2}=\frac{f_{21}(x,y,s)}{\omega}\PD_{s}\phi+f_{22}(x,y,s)\PD_{\omega}\phi.
\end{equation*}
\subsection{Expression for $\xi_{1}-\eta_{1}$} Now we consider
$\xi_{1}-\eta_{1}$, where we recall that $(\xi_{1},\xi_{2}) =
\PD_{x}\phi$ and $(\eta_{1},\eta_{2})=-\PD_{y}\phi$ are the cotangent
variables corresponding to $(x_{1},x_{2})$ and $(y_{1},y_{2})$
respectively. 

Then note that $\xi_{1}-\eta_{1}$ is $\PD_{x_{1}}\phi+\PD_{y_{1}}\phi$. But this is the same as $-\PD_{s}\phi$. 
Hence 
\[
A_{3}:=\xi_{1}-\eta_{1}=-\PD_{s}\phi.
\]

\subsection{Expression for $(x_{2}-y_{2})(\xi_{2}+\eta_{2})$}

We have (up to a negative sign)
\begin{align*}
(x_{2}-y_{2})(\xi_{2}+\eta_{2})&=\omega(x_{2}-y_{2})\lb
\frac{x_{2}}{\norm{x-\g_{T}}}+\frac{x_{2}}{|x-\g_{R}|}+\frac{y_{2}}{\norm{y-\g_{T}}}+\frac{y_{2}}{\norm{y-\g_{R}}}\rb\\
&=
2\omega\Bigg{(} \frac{x_{2}^{2}\cosh \rho}{\cosh^{2}\rho-\cos^{2}\theta}-\frac{y^{2}\cosh \rho'}{\cosh^{2}\rho'-\cos^{2}\theta'}\\
&\hspace{0.3in}+\frac{x_{2}y_{2}\cosh\rho'}{\cosh^{2}\rho'-\cos^{2}\theta'}-\frac{x_{2}y_{2}\cosh \rho}{\cosh^{2}\rho-\cos^{2}\theta}\Bigg{)}\\
&=2\omega\Bigg{(} \frac{x_{2}^{2}\cosh \rho}{\cosh^{2}\rho-\cos^{2}\theta}-\frac{x_{2}^{2}\cosh \rho'}{\cosh^{2}\rho'-\cos^{2}\theta'}\\
&+(x_{2}^{2}-y_{2}^{2})\frac{\cosh \rho'}{\cosh^{2}\rho'-\cos^{2}\theta'}\\
&+
\frac{x_{2}y_{2}\cosh\rho'}{\cosh^{2}\rho'-\cos^{2}\theta'}-\frac{x_{2}y_{2}\cosh \rho}{\cosh^{2}\rho-\cos^{2}\theta}\Bigg{)},\\ 
\end{align*}
Here we have added and subtracted $\frac{x_{2}^{2}\cosh \rho'}{\cosh^{2}\rho'-\cos^{2}\theta'}$ in the previous equation. Simplifying this we get,
\begin{align*}
(x_{2}-y_{2})(\xi_{2}+\eta_{2})=&
2\omega\Bigg{(}(x_{2}^{2}-x_{2}y_{2})\Bigg[
\frac{(\cosh \rho\cosh \rho'+\cos^{2}\theta)(\cosh \rho'-\cosh \rho)}{(\cosh^{2}\rho-\cos^{2}\theta)(\cosh^{2}\rho'-\cos^{2}\theta')}\\
&\quad\quad\phantom{(x_{2}^{2}-x_{2}y_{2})}+\frac{\cosh \rho(\cos\theta+\cos\theta')(\cos
\theta-\cos\theta')}{(\cosh^{2}\rho-\cos^{2}\theta)(\cosh^{2}\rho'-\cos^{2}\theta')}\bigg]\\
&+(x_{2}^{2}-y_{2}^{2})\frac{\cosh \rho'}{\cosh^{2}\rho'-\cos^{2}\theta'}\Bigg{)}.
\end{align*}

Now note that  $\cosh \rho'-\cosh \rho=\frac{\PD_{\omega}\phi}{2\A}$ and we already have expressions for $\cos\theta-\cos\theta'$ and $x_{2}^{2}-y_{2}^{2}$ involving combinations of $\PD_{\omega}\phi$ and $\PD_{s}\phi$. 

Hence we can write $(x_{2}-y_{2})(\xi_{2}+\eta_{2})$ in the form
\[
(x_{2}-y_{2})(\xi_{2}+\eta_{2})=f_{41}(x,y,s)\PD_{s}\phi+\omega f_{42}(x,y,s)\PD_{\omega}\phi.
\]

For future reference, note that our calculation in this section shows
that \bel{handy-id}\begin{aligned}\frac{\cosh
\rho}{\cosh^{2}\rho-\cos^{2}\theta}-\frac{\cosh
\rho'}{\cosh^{2}\rho'-\cos^{2}\theta'} &= \frac{(\cosh \rho\cosh
\rho'+\cos^{2}\theta)(\cosh \rho'-\cosh
\rho)}{(\cosh^{2}\rho-\cos^{2}\theta)(\cosh^{2}\rho'-\cos^{2}\theta')}\\
&\qquad+\frac{\cosh \rho(\cos\theta+\cos\theta')(\cos
\theta-\cos\theta')}
{(\cosh^{2}\rho-\cos^{2}\theta)(\cosh^{2}\rho'-\cos^{2}\theta')}\end{aligned}\ee

\subsection{Expression for $(x_{2}+y_{2})(\xi_{2}-\eta_{2})$}
This is very similar to the derivation of the expression we obtained for $(x_{2}-y_{2})(\xi_{2}+\eta_{2})$.
\subsection{Expression for $\rho_{2}^{2}-\eta_{2}^{2}$}

Using \eqref{def:X2} and \eqref{def:Y2}, we have 
\begin{align*}
\xi_{2}^{2}-\eta_{2}^{2}&=\omega^{2}\lb
\lb\frac{x_{2}}{|x-\g_{T}|}+\frac{x_{2}}{|x-\g_{R}|}\rb^2-\lb\frac{y_{2}}{|y-\g_{T}|}+\frac{y_{2}}{|y-\g_{R}|}\rb^2\rb\\
&={4}\omega^{2}\lb x_{2}^{2}\frac{\cosh^{2}\rho}{(\cosh^{2}\rho-\cos^{2}\theta)^{2}}-y_{2}^{2}\frac{\cosh^{2}\rho'}{(\cosh^{2}\rho'-\cos^{2}\theta')^{2}}\rb\\
&={4}\omega^{2}\Bigg{\{} x_{2}^{2}\lb
\frac{\cosh^{2}\rho}{(\cosh^{2}\rho-\cos^{2}\theta)^{2}}-\frac{\cosh^{2}\rho'}{(\cosh^{2}\rho'-\cos^{2}\theta')^{2}}\rb\\
&+(x_{2}^{2}-y_{2}^{2})\frac{\cosh^{2}\rho'}{(\cosh^{2}\rho'-\cos^{2}\theta')^{2}}\Bigg{\}}.
\end{align*}
Now using the computations for $x_{2}^{2}-y_{2}^{2}$ and
$(x_{2}-y_{2})(\rho_{2}+\eta_{2})$, in particular \eqref{handy-id},
 we can write $\xi_{2}^{2}-\eta_{2}^{2}$ in the form
\[
\xi_{2}^{2}-\eta_{2}^{2}=\omega f_{61}(x,y,s)\PD_{s}\phi+\omega^{2}f_{62}(x,y,s)\PD_{\omega}\phi
\]
for smooth functions $f_{61}, f_{62}$.

\end{appendix}

\medskip


\medskip
 {\it E-mail address: }{\tt venkyp.krishnan@gmail.com}\\
 \indent{\it E-mail address: }{\tt todd.quinto@tufts.edu}\\
  \end{document}